\newtheorem{thm}{\bf Theorem}[section]
\newtheorem{prop}[thm]{\sc Proposition}
\newtheorem{lem}[thm]{\sc Lemma}
\theoremstyle{definition}
\theoremstyle{definition}
\theoremstyle{definition}
\newtheorem{rem}[thm]{\sc Remark}
\theoremstyle{definition}
\theoremstyle{definition}
\theoremstyle{definition}
\numberwithin{equation}{section}
\begin{document}
\title[Connected sum]{Connected sum of CR manifolds with positive CR Yamabe
constant}
\author{Jih-Hsin Cheng}
\address{Institute of Mathematics, Academia Sinica and National Center for
Theoretical Sciences, Taipei, Taiwan, ROC}
\email{cheng@math.sinica.edu.tw }
\author{Hung-Lin Chiu}
\address{Department of Mathematics, National Tsing-Hua University, Hsinchu,
Taiwan, ROC}
\email{hlchiu@math.nthu.edu.tw}
\author{Pak Tung Ho}
\address{Department of Mathematics, Sogang University, Seoul, Korea}
\email{paktung@yahoo.com.hk, ptho@sogang.ac.kr}

\begin{abstract}
Suppose $M_{1}$ and $M_{2}$ are $3$-dimensional closed (compact without
boundary) CR manifolds with positive CR Yamabe constant. In this note, we
show that the connected sum of $M_{1}$ and $M_{2}$ also admits a CR
structure with positive CR Yamabe constant.
\end{abstract}

\maketitle


\section{Introduction}

In Riemannian geometry, the scalar curvature is the simplest curvature
invariant of a Riemannian manifold. It was shown by Gromov-Lawson in \cite%
{Gromov&Lawson} and independently by Schoen-Yau in \cite{Schoen&Yau} that
the connected sum of two closed (that is, compact without boundary)
manifolds of positive scalar curvature has a metric of positive scalar
curvature. It was also shown by Schoen-Yau in \cite{Schoen&Yau} that the
connected sum of two closed conformally flat manifolds of positive scalar
curvature has a conformally flat metric of positive scalar curvature (see
Corollary 5 in \cite{Schoen&Yau}). In view of the similarity between the
scalar curvature in Riemannian geometry and the Tanaka-Webster scalar
curvature in CR geometry, it would be natural to ask if the corresponding
results hold for the Tanaka-Webster scalar curvature. It is the purpose of
this note to answer this question.

For basic materials in CR geometry and pseudohermitian geometry, we refer
the readers to \cite{DT}, \cite{Jerison&Lee}, \cite{Lee} or \cite{Webster}.
and the references therein. Let $(M,J)$ be a closed, strictly pseudoconvex
CR manifold of dimension $2n+1$. 
Take a contact form $%
\theta$, which means a $1$-form satisfying the complete non-integrability
condition: $\theta\wedge(d\theta)^{n}\neq 0$ for each point of $M$. Let $%
\xi=\ker{\theta}$, which is the associated contact bundle. We choose $\theta$
compatible with $J$ in the following sense: the CR structure $J$ is defined
on $\xi$ and $d\theta(X,JX)>0$ for any nonzero vector $X\in\xi$. The Levi
metric (or form) of $\theta$ is the real symmetric bilinear form $L_{\theta}$
on $\xi$ defined by 
\begin{equation*}
L_{\theta}(X,Y)=2d\theta(X,JY),\ \ \ X,Y\in\xi.
\end{equation*}
$L_{\theta}$ extends by complex linearity to $\xi\otimes C$, and induces a
hermitian metric (or form) on the subbundle $\xi_{1,0}\subset\xi\otimes C$
of all CR holomorphic vectors (Note that, instead of $\xi_{1,0}$, D. Jerison
and J. Lee in \cite{Jerison&Lee} used $T_{1,0}$ to denote the CR holomorphic
subbundle). For a real function $u$, the subgradient of $u$ is denoted by $%
\nabla_{b}u$ and defined as the unique vector in $\xi$ satisfying 
\begin{equation*}
Xu=L_{\theta}(X,\nabla_{b}u),
\end{equation*}
for all $X\in\xi$. Here $Xu$ means the directional derivative of $u$ along $%
X $. The norm of $\nabla_{b}u$ is defined by 
\begin{equation*}
|\nabla_{b}u|^{2}_{\theta}=L_{\theta}(\nabla_{b}u,\nabla_{b}u).
\end{equation*}
In \cite{DT}, S. Dragomir and G. Tomassini considered the gradient $\nabla u$
of $u$ with respect to the Webster metric $g_{\theta}$. It is easy to see
that $\nabla_{b}u=\pi_{H}\nabla u$, where $\pi_{H}:TM\rightarrow \xi$ is the
natural orthogonal projection defined in \cite{DT} (in which, instead of $%
\xi $, they used $H(M)$ to denote the contact bundle). It is also easy to
check that 
\begin{equation*}
L_{\theta}(\nabla_{b}u,\nabla_{b}v)=L_{\theta}^{*}(du,dv)=L_{%
\theta}^{*}(d_{b}u,d_{b}v),
\end{equation*}
for any real functions $u$ and $v$, where $L_{\theta}^{*}$ is the induced
metric on $\xi^{*}$, determined by $L_{\theta}$, and extends naturally to $%
T^{*}M$ (see \cite{Jerison&Lee} for more details) and $d_{b}=\bar{\partial}%
_{b}+\partial_{b}$. 
Take the volume form $dV:=\theta \wedge (d\theta )^{n}.$ The sublaplacian
operator $\Delta_{b}$ is defined on real functions $u\in C^{\infty}(M)$ by 
\begin{equation*}
\int_{M}(\Delta_{b}u)vdV=\int_{M}L_{\theta}^{*}(du,dv)dV,
\end{equation*}
for all $v\in C^{\infty}_{0}(M)$.

If $\theta$ is replaced by $\tilde{\theta}=u^{p-2}\theta$, with $p=2+\frac{2%
}{n}$, then we have the transformation law for Tanaka-Webster scalar
curvatures 
\begin{equation*}
\tilde{R}=u^{1-p}(b_{n}\Delta_{b}+R)u
\end{equation*}
where $b_{n}=2+\frac{2}{n}$, and $R$ (or $R_{J,\theta}$) and $\tilde{R}$ (or 
$R_{J,\tilde\theta}$) are respectively Tanaka-Webster scalar curvature on
the pseudohermitian manifold $(M,J,\theta )$ and $(M,J,\tilde{\theta} )$.

 We define the CR
Yamabe constant $\lambda (M,J)$ (or $\lambda (M)$ if $J$ is clear in the
context) as follows: (see \cite{Jerison&Lee})
\begin{equation*}
\lambda (M,J)=\inf_{u>0}\frac{E_{\theta }(u)}{(\int_{M}u^{2+\frac{2}{n}%
}dV_{\theta })^{\frac{n}{n+1}}},
\end{equation*}%
where
\begin{equation*}
E_{\theta }(u)=\int_{M}\left( (2+\frac{2}{n})|\nabla
_{b}u|^{2}+Ru^{2}\right) dV_{\theta }.
\end{equation*}%
Similar to the Riemannian case, one can show that $\lambda (M,J)>0$ if and
only if there exists a contact form $\tilde{\theta}$ conformal to $\theta $
such that the Tanaka-Webster scalar curvature of $\tilde{\theta}$ is
positive.

In \cite{Cheng&Chiu}, the first and the second authors proved the following
theorem, which is the CR version of Schoen-Yau's result mentioned above (see
also \cite{Kobayashi} for a different proof by O. Kobayashi). Recall that a CR manifold is called  spherical if it is locally CR
isomorphic to the standard CR sphere $S^{2n+1}$.

\begin{thm}
$($\cite{Cheng&Chiu}$)$\label{thm1} Suppose $(M_1, J_1)$ and $(M_2, J_2)$
are two closed, spherical CR manifolds of dimension $2n + 1$ with $%
\lambda(M_k, J_k) > 0$ for $k = 1, 2$. Then their connected sum $M_1\# M_2$
admits a spherical CR structure $J$ with $\lambda(M_1\# M_2, J) > 0$.
\end{thm}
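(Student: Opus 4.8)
The plan is to carry out a CR version of the Gromov--Lawson surgery \cite{Gromov&Lawson} --- equivalently, of Schoen--Yau's conformally flat connected sum (Corollary 5 of \cite{Schoen&Yau}). By the criterion recalled above, it suffices to produce on $M_{1}\#M_{2}$ a spherical CR structure $J$ together with a contact form $\theta$ for which $R_{\theta}\ge 0$ and $R_{\theta}\not\equiv 0$. Indeed, the first eigenvalue $\lambda_{1}$ of $b_{n}\Delta_{b}+R_{\theta}$ is then strictly positive: if $b_{n}\Delta_{b}\phi+R_{\theta}\phi=\lambda_{1}\phi$ with $\phi>0$ and $\lambda_{1}=0$, integrating $b_{n}\Delta_{b}\phi=-R_{\theta}\phi$ against $1$ and using $\int_{M}\Delta_{b}\phi\,dV=0$ (take $v\equiv 1$ in the definition of $\Delta_{b}$) forces $\int_{M}R_{\theta}\phi\,dV=0$, hence $R_{\theta}\equiv 0$, a contradiction. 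Then $\widetilde\theta=\phi^{p-2}\theta$ satisfies $R_{\widetilde\theta}=\phi^{1-p}(b_{n}\Delta_{b}+R_{\theta})\phi=\lambda_{1}\phi^{2-p}>0$, so $\lambda(M_{1}\#M_{2},J)>0$.

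\textbf{Flat charts and a positively curved CR neck.} Since $\lambda(M_{k},J_{k})>0$, fix contact forms $\theta_{k}$ on $M_{k}$ with $R_{\theta_{k}}>0$. Pick $p_{k}\in M_{k}$; sphericity gives a CR embedding of a neighborhood of $p_{k}$ onto a neighborhood of $0$ in the Heisenberg group $(\mathbb{H}^{n},J_{0})$, through which $\theta_{k}$ becomes $f_{k}\theta_{0}$ near $0$ with $f_{k}>0$ and positive Tanaka--Webster scalar curvature. The model neck is $\mathbb{H}^{n}\setminus\{0\}$ with the dilation--invariant contact form $\theta_{\mathrm{cyl}}=\rho^{-2}\theta_{0}$, $\rho$ the Heisenberg gauge --- the exact analogue of the round Riemannian cylinder. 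Writing $\theta_{\mathrm{cyl}}=u^{p-2}\theta_{0}$ with $u=\rho^{-n}$ (using $p-2=2/n$) and $R_{\theta_{0}}=0$, the transformation law gives $R_{\theta_{\mathrm{cyl}}}=b_{n}u^{1-p}\Delta_{b}(\rho^{-n})$. A direct computation in the Heisenberg frame gives $\Delta_{b}(\rho^{-n})=c_{n}\,|z|^{2}\rho^{-n-4}$ with $c_{n}>0$ (the exponent and sign are pinned down by the homogeneity of $\Delta_{b}$ and by $\Delta_{b}(\rho^{2-Q})=0$, $Q=2n+2$), whence $R_{\theta_{\mathrm{cyl}}}=b_{n}c_{n}\,|z|^{2}/\rho^{2}\ge 0$, bounded and strictly positive off the center of $\mathbb{H}^{n}$. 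Thus an arbitrarily long cylindrical segment carries a contact form with $R\ge 0$.

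\textbf{Surgery, interpolation, and the main obstacle.} Near each $p_{k}$ replace the cap by a ``CR torpedo'': a punctured neighborhood of $p_{k}$ whose new contact form equals $f_{k}\theta_{0}$ near its boundary and becomes $\theta_{\mathrm{cyl}}$ along a long cylindrical end running off to where $p_{k}$ was. The Heisenberg inversion $\iota$ is a CR automorphism of $\mathbb{H}^{n}\setminus\{0\}$ with $\iota^{*}\theta_{\mathrm{cyl}}=\theta_{\mathrm{cyl}}$ that turns such an end inside out; gluing the two cylindrical ends by $\iota$ (after extending them far enough) yields a closed manifold diffeomorphic to $M_{1}\#M_{2}$, locally CR--modeled on $\mathbb{H}^{n}$ since every identification used is a CR automorphism of $\mathbb{H}^{n}$ --- hence carrying a spherical CR structure $J$ --- together with a contact form $\theta$ equal to $\theta_{k}$ on the bodies of the $M_{k}$, to $\theta_{\mathrm{cyl}}$ on the bulk of the neck, and undetermined only on the two short torpedo transition annuli. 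On each such annulus one prescribes the conformal factor interpolating $f_{k}\theta_{0}$ and $\theta_{\mathrm{cyl}}$ by a Gromov--Lawson--type bending profile and estimates $R_{\theta}=u^{1-p}(b_{n}\Delta_{b}+R)u$: because the surgery balls may be taken as small as we wish, the ambient curvature rescales away and what remains is the scale-invariant contribution of the bend, whose sign is controlled exactly as above. Choosing the profile so that $R_{\theta}\ge 0$ persists across each transition while the two prescribed ends are matched smoothly is the one genuine difficulty; it is the CR counterpart of the core surgery estimate of \cite{Gromov&Lawson} and \cite{Schoen&Yau}, and it is where essentially all the analytic work lies. With such a $\theta$ in hand we have $R_{\theta}\ge 0$ and $R_{\theta}>0$ on the bodies of the $M_{k}$, so the reduction in the first paragraph yields $\lambda(M_{1}\#M_{2},J)>0$.
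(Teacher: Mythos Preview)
The paper does not reprove this theorem; it cites \cite{Cheng&Chiu} and offers only a one-paragraph sketch: remove small balls around $p_{1},p_{2}$, use sphericality to identify each punctured neighborhood with a piece of the Heisenberg group, attach Heisenberg cylinders, and glue. Your construction is exactly this --- the flat CR charts, the dilation-invariant neck $\theta_{\mathrm{cyl}}=\rho^{-2}\theta_{0}$, and gluing by the Heisenberg inversion --- and your computation that $R_{\theta_{\mathrm{cyl}}}\ge 0$ (vanishing only along the center $\{z=0\}$) is correct.

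Where you part ways with the paper's sketch is in the endgame. The approach of \cite{Cheng&Chiu}, which the paper says is modeled on Kobayashi \cite{Kobayashi}, estimates the CR Yamabe \emph{functional} of the glued contact form directly and shows the infimum stays positive; no pointwise sign on $R_{\theta}$ is required. You instead aim for $R_{\theta}\ge 0$, $R_{\theta}\not\equiv 0$, and close with a first-eigenvalue argument. Both strategies are legitimate, but yours concentrates the entire burden on the two transition annuli: you must exhibit a conformal profile taking $f_{k}\theta_{0}$ to $\theta_{\mathrm{cyl}}$ while keeping $R\ge 0$ throughout. You explicitly flag this as ``the one genuine difficulty \ldots\ where essentially all the analytic work lies'' and then do not carry it out. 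That is the gap: the proposal is a correct outline that halts at precisely the step separating a sketch from a proof. Note moreover that since $R_{\theta_{\mathrm{cyl}}}=0$ along the $t$-axis, your interpolation has zero margin for error on that set, which makes the pointwise estimate genuinely delicate; the Kobayashi-type integral estimate is more forgiving in this respect, which is presumably why \cite{Cheng&Chiu} follows that route.
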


The idea of the proof of Theorem \ref{thm1} was motivated by the work of O.
Kobayashi in \cite{Kobayashi}. More precisely, we fix a point $p_{j}\in
M_{j} $ for $j=1,2$. We first take off two small balls around $p_{1}$ and $%
p_{2}$. Since $M_{j}$ are spherical, we can attach the Heisenberg cylinder
in each of punched neighborhood of $p_{j}$. We then glue two Heisenberg
cylinders together to get a spherical CR manifold.

In this note, we continue our study on the Tanaka-Webster scalar curvature
of connected sum on CR manifolds without assuming they are spherical.
In particular, we prove the following
theorem, which can be viewed as the analogous result of Gromov-Lawson and of
Schoen-Yau mentioned above. \newline

\textbf{Theorem A.} Suppose $(M_1, J_1)$ and $(M_2, J_2)$ are two $3$%
-dimensional closed CR manifolds with $\lambda(M_k, J_k) > 0$ for $k = 1, 2$%
. Then their connected sum $M_1\# M_2$ admits a CR structure $J$ with $%
\lambda(M_1\# M_2, J) > 0$. \newline

Note that the above argument for the spherical case cannot be applied
directly, since we cannot attach the Heisenberg cylinder to the punched
neighborhood of a point. However, in this paper, we will mainly construct a
new CR structure, through the deformation tensor, which outside a ball is the
given CR structure and is
spherical in a neighborhood contained in the ball. In addition, we can
construct such a CR structure such that its Yamabe constant is as close as
possible to the one of the given CR structure. Hence, together with Theorem %
\ref{thm1}, we obtain Theorem A. The idea of the proof is elegant and also, as Theorem \ref{thm1}, 
motivated by the work of Kobayashi in \cite{Kobayashi}. However, due to different nature of geometric structures,
the way to construct the new CR structure is entirely different from that in Riemannian geometry. We use the concept 
of the deformation tensor, which has been well studied in $3$-dimensional CR geometry.

We learned that Dietrich \cite{Dietrich}, among others, also claimed the same
statement as in Theorem A. But the proof of his key lemma (Lemma 5.6 in
\cite{Dietrich}) is not clear to us (cf. the proof of Proposition
3.6 in this paper).

\textbf{Acknowledgments. }J.-H. Cheng (resp. H.-L. Chiu) would like to
thank the Ministry of Science and Technology of Taiwan, R.O.C. for the
support of the project: MOST 107-2115-M-001-011- (resp. MOST
106-2115-M-007-017-MY3). J.-H. Cheng would also like to thank the
National Center for Theoretical Sciences for the constant support.

\section{Basic Material}

For basic material in $CR$ and pseudohermitian geometry, we refer the reader
to \cite{DT}, \cite{Jerison&Lee}, \cite{Lee} or \cite{Webster}. Let $%
(M^{3},J,\theta )$ be a pseudohermitian manifold. In \cite{Webster}, S.
Webster showed that there is a natural connection in the bundle $\xi _{1,0}$
of all CR holomorphic vectors adapted to the pseudohermitian structure $%
(J,\theta )$. To define the connection, choose an \textbf{orthonormal}
admissible coframe $\{\theta ^{1}\}$ and dual frame $\{Z_{1}\}$ for $\xi
_{1,0}$. Webster showed that there are uniquely determined $1$-forms $\theta
_{1}{}^{1},\tau ^{1}$ on $M$ satisfying the following structure equations
\begin{equation}
\begin{split}
d\theta ^{1}& =\theta ^{1}\wedge \theta _{1}{}^{1}+\theta \wedge \tau ^{1},
\\
0& =\theta _{1}{}^{1}+\theta _{\bar{1}}{}^{\bar{1}}, \\
0& =\tau _{1}\wedge \theta ^{1},
\end{split}
\label{steq}
\end{equation}%
in which $\tau _{1}=\tau ^{\bar{1}}$. The forms $\theta _{1}{}^{1},\tau ^{1}$
are called the pseudohermitian connection form and torsion form,
respectively. Recall that the Heisenberg group $H_{1}$ is the space $\mathbb{%
R}^{3}$ endowed with the group multiplication
\begin{equation*}
(x_{1},y_{1},z_{1})\circ
(x_{2},y_{2},z_{2})=(x_{1}+x_{2},y_{1}+y_{2},z_{1}+z_{2}+y_{1}x_{2}-x_{1}y_{2}),
\end{equation*}%
which is a $3$-dimensional Lie group. The space of all left invariant vector
fields is spanned by the following three vector fields:
\begin{equation*}
\mathring{e}_{1}=\frac{\partial }{\partial x}+y\frac{\partial }{\partial z}%
,~~\mathring{e}_{2}=\frac{\partial }{\partial y}-x\frac{\partial }{\partial z%
}~~\mbox{ and }~~T=\frac{\partial }{\partial z}.
\end{equation*}%
The standard contact bundle on $H_{1}$ is the subbundle $\mathring{\xi}$ of
the tangent bundle $TH_{1}$, which is spanned by $\mathring{e}_{1}$ and $%
\mathring{e}_{2}$. It can also be equivalently defined as the kernel of the
contact form
\begin{equation*}
\Theta =dz+xdy-ydx.
\end{equation*}%
The CR structure on $H_{1}$ is the endomorphism $\mathring{J}:\mathring{\xi}%
\rightarrow \mathring{\xi}$ defined by
\begin{equation*}
\mathring{J}(\mathring{e}_{1})=\mathring{e}_{2}~~\mbox{ and }~~J(\mathring{e}%
_{2})=-\mathring{e}_{1}.
\end{equation*}%
One can view $H_{1}$ as a pseudohermitian manifold with the standard
pseudohermitian structure $(\mathring{J},\Theta)$. In the Heisenberg
group $H_{1}$, relative to the standard left invariant frame $\mathring{Z}%
_{1}=\frac{1}{2}(\mathring{e}_{1}-i\mathring{e}_{2})$ (dual coframe is $%
\mathring{\theta}^{1}=dx+idy$), it is easy to see that both forms $\theta
_{1}{}^{1}$ and $\tau ^{1}$ \textbf{vanish}.

\subsection{The deformation tensor}

Suppose $J$ is a CR structure compatible with $\Theta $ in the following sense: it is
defined on $\mathring{\xi}$ such that
\begin{equation}\label{2.2}
d\Theta (X,JX)>0\mbox{ for any nonzero vector }X\in \mathring{\xi}.
\end{equation}
Let $Z_{1}$ be a CR holomorphic vector field relative
to $J$. We express it as
\begin{equation}\label{2.3}
Z_{1}=a_{1}{}^{1}\mathring{Z}_{1}+b_{1}{}^{\bar{1}}\mathring{Z}_{\bar{1}},
\end{equation}%
for some function $a_{1}{}^{1},b_{1}{}^{\bar{1}}$. We compute
\begin{equation}\label{orex}
Z_{1}\wedge Z_{\bar{1}}=(|a_{1}{}^{1}|^{2}-|b_{1}{}^{\bar{1}}|^{2})\mathring{%
Z}_{1}\wedge \mathring{Z}_{\bar{1}}.
\end{equation}%
The compatibility of $J$ with $\Theta$ in (\ref{2.2})
implies that $d\Theta(Z_{1}, JZ_{1})>0$,
which together with (\ref{orex}) implies
that
\begin{equation}\label{2.4}
|a_{1}{}^{1}|^{2}>|b_{1}{}^{\bar{1}}|^{2}.
\end{equation}
In particular, we have $%
a_{1}{}^{1}\neq 0$. Thus we can define
\begin{equation}\label{2.5}
\phi =(a_{\bar{1}}{}^{\bar{1}})^{-1}b_{\bar{1}}{}^{1},
\end{equation}%
where $a_{\bar{1}}{}^{\bar{1}},b_{\bar{1}}{}^{1}$ is the conjugate of $%
a_{1}{}^{1},b_{1}{}^{\bar{1}}$, respectively. We call $\phi $ \textbf{the
deformation tensor} of $J$ (note that $\phi $ depends on frames. It behaves
as a tensor when changing frames. For notational simplicity, we suppress its
tensor indices). Thus (\ref{2.4})
 implies that $|\phi |<1$.
 It follows from (\ref{2.3}) and (\ref{2.5})
 that any CR anti-holomorphic vector field $Z_{\bar{1}}$ has the form $%
Z_{\bar{1}}=a_{\bar{1}}{}^{\bar{1}}(\mathring{Z}_{\bar{1}}+\phi \mathring{Z}%
_{1})$, for some function $a_{\bar{1}}{}^{\bar{1}}$. Conversely, any
function $\phi $ with $|\phi |<1$ defines a CR structure $J$ compatible with
$\Theta $ by regarding $Z_{\bar{1}}=a_{\bar{1}}{}^{\bar{1}}(\mathring{Z}_{%
\bar{1}}+\phi \mathring{Z}_{1})$ as its corresponding CR anti-holomorphic
vector field.

\section{Proof}

Let $(M^3, J,\theta)$ be a pseudohermitian manifold. To prove \textbf{%
Theorem A}, first we would like to construct a sequence of pseudohermitian
structures $\{(J_{i},\theta_{i})\}$ such that $\{(J_{i},\theta_{i})\}$
converges to $(J,\theta)$ in $C^0$ and the corresponding Tanaka-Webster scalar curvature $%
R_{i}$ also converges to the Tanaka-Webster scalar curvature $R$ of $(J,\theta)$ in $C^0$. This is Proposition \ref{mainpro}. In addition, each CR structure $J_{i}$ we construct in Proposition \ref{mainpro} is CR spherical around a given point $p\in M$. Then, together with Proposition \ref{proflim} and Theorem \ref{thm1}, we obtain {\bf Theorem A}.

For Proposition \ref{mainpro}, we construct such a sequence as follows:
For each $p\in M$, there exists a neighborhood $U$ of $p\in M$ which is
contactomorphic to a neighborhood $V$ of $0\in H_{1}$. Let $%
\Phi:U\rightarrow V$ be such a contactomorphism and $\Phi(p)=0$, we identify
$U$ with $V$ under $\Phi$. Then, on $U$ (or on $V$), the CR structure $J$
can be represented by a deformation tensor $\phi$ with $|\phi|<1$ such that $%
\mathring{Z}_{\bar 1}+\phi\mathring{Z}_{1}$ is a CR anti-holomorphic vector
field. In addition, it is easy to see that one can take a contactomorphism $%
\Phi$ with $\Phi(p)=0$ such that the deformation function $\phi$ satisfies $%
\phi(0)=0\ \text{and}\ \phi_{1}(0)=\phi_{\bar 1}(0)=0$, where $\phi_{1}=%
\mathring{Z}_{1}\phi$ and $\phi_{\bar 1}=\mathring{Z}_{\bar1}\phi$. Thus, we
can assume, without loss of generality, that
\begin{equation}\label{3.1}
\begin{split}
&\theta|_{U}=\Theta,\\
&\phi(0)=\phi_{1}(0)=\phi_{\bar 1}(0)=0.
\end{split}
\end{equation}
Relative to the contact form $\Theta$,
\begin{equation}  \label{defof}
Z_{\bar 1}=\left(\frac{1}{1-|\phi|^{2}}\right)^{1/2}(\mathring{Z}_{\bar
1}+\phi\mathring{Z}_{1}),
\end{equation}
is a unit vector field. By (\ref{defof}), the dual coframe is
\begin{equation}
\theta^{1}=\left(\frac{1}{1-|\phi|^{2}}\right)^{1/2}(\mathring{\theta}%
^{1}-\phi\mathring{\theta}^{\bar 1}).
\end{equation}
On $U$, we can express the pseudohermitian connection form, torsion form, Tanaka-Webster curvature and sub-laplacian of $(J,\Theta)$ in terms of objects of the Heisenberg group as Propositions \ref{foofin1} and \ref{foofin2} specify.
These expressions help us construct a pseudohermitian sequence we want.

\begin{prop}\label{foofin1}
Let $\theta_{1}{}^{1}$ and $\tau^{1}=A^{1}{}_{\bar 1}\theta^{\bar 1}$ be the
pseudohermitian connection form and torsion form relative to $\theta^{1}$,
respectively. Then we have
\begin{equation}  \label{trlaw00}
\begin{split}
A^{1}{}_{\bar 1}&=-\frac{\phi_{0}}{1-|\phi|^{2}}; \\
\theta_{1}{}^{1}&=-d\ln{\left(\frac{1}{1-|\phi|^{2}}\right)}^{1/2}+\left[%
\frac{\bar{\phi}\phi_{0}}{1-|\phi|^{2}}\right]\Theta \\
&\hspace{4mm}+\left[\frac{\bar{\phi}_{\bar 1}+\bar{\phi}\phi_{1}}{1-|\phi|^{2}}+\bar{\phi%
}\mathring{Z}_{\bar 1}\left(\frac{1}{1-|\phi|^{2}}\right)+\mathring{Z}%
_{1}\left(\frac{1}{1-|\phi|^{2}}\right)\right]\mathring{\theta}^{1} \\
&\hspace{4mm}-\left[\frac{\phi_{1}+\phi\bar{\phi}_{\bar 1}}{1-|\phi|^{2}}+|\phi|^{2}%
\mathring{Z}_{\bar 1}\left(\frac{1}{1-|\phi|^{2}}\right)+\phi\mathring{Z}%
_{1}\left(\frac{1}{1-|\phi|^{2}}\right)\right]\mathring{\theta}^{\bar 1},
\end{split}%
\end{equation}
where all the derivatives are computed in $H_{1}$; for example, $\phi_{1}=%
\mathring{Z}_{1}\phi,\ \phi_{0}=T\phi$, and so on.
\end{prop}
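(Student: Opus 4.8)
The plan is to compute $d\theta^{1}$ by hand and then recover $\theta_{1}{}^{1}$ and $\tau^{1}$ from the structure equations \eqref{steq}, invoking the uniqueness of the pseudohermitian connection; equivalently, one may simply verify that the asserted forms satisfy \eqref{steq} and quote the same uniqueness. Write $f=(1-|\phi|^{2})^{-1/2}$, so that $\theta^{1}=f(\mathring{\theta}^{1}-\phi\mathring{\theta}^{\bar 1})$. Since both the connection form and the torsion form of the standard structure $(\mathring{J},\Theta)$ on $H_{1}$ vanish, the Heisenberg structure equations reduce to $d\mathring{\theta}^{1}=0$ and $d\Theta=i\mathring{\theta}^{1}\wedge\mathring{\theta}^{\bar 1}$. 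Differentiating $\theta^{1}$ and using $d\phi=\phi_{1}\mathring{\theta}^{1}+\phi_{\bar 1}\mathring{\theta}^{\bar 1}+\phi_{0}\Theta$, one gets
\[
d\theta^{1}=\frac{df}{f}\wedge\theta^{1}-f\,d\phi\wedge\mathring{\theta}^{\bar 1}
=\theta^{1}\wedge(-d\ln f)-f\phi_{1}\,\mathring{\theta}^{1}\wedge\mathring{\theta}^{\bar 1}-f\phi_{0}\,\Theta\wedge\mathring{\theta}^{\bar 1}.
\]

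Next I would pass to the adapted coframe. Inverting $\theta^{1}=f(\mathring{\theta}^{1}-\phi\mathring{\theta}^{\bar 1})$ and its conjugate gives $\mathring{\theta}^{1}=f(\theta^{1}+\phi\theta^{\bar 1})$ and $\mathring{\theta}^{\bar 1}=f(\bar{\phi}\theta^{1}+\theta^{\bar 1})$, whence in particular $\mathring{\theta}^{1}\wedge\mathring{\theta}^{\bar 1}=\theta^{1}\wedge\theta^{\bar 1}$. Substituting and collecting the terms that begin with $\theta^{1}\wedge$,
\[
d\theta^{1}=\theta^{1}\wedge\big(-d\ln f-f\phi_{1}\theta^{\bar 1}+f^{2}\bar{\phi}\,\phi_{0}\,\Theta\big)-f^{2}\phi_{0}\,\Theta\wedge\theta^{\bar 1}.
\]
Comparing with the first equation of \eqref{steq} and remembering that the third equation of \eqref{steq} forces $\tau^{1}=A^{1}{}_{\bar 1}\theta^{\bar 1}$, one reads off $A^{1}{}_{\bar 1}=-f^{2}\phi_{0}=-\phi_{0}/(1-|\phi|^{2})$ and $\theta_{1}{}^{1}=-d\ln f-f\phi_{1}\theta^{\bar 1}+f^{2}\bar{\phi}\,\phi_{0}\,\Theta+c\,\theta^{1}$, where the scalar $c$ is invisible to the first structure equation because the $\theta^{1}$-component of $\theta_{1}{}^{1}$ drops out of $\theta^{1}\wedge\theta_{1}{}^{1}$. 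The function $c$ is then determined by the second equation of \eqref{steq}, $\theta_{1}{}^{1}+\theta_{\bar 1}{}^{\bar 1}=0$: matching $\theta^{1}$-components yields $c=2(d\ln f)_{1}+f\,\overline{\phi_{1}}$, where $(d\ln f)_{1}$ is the $\theta^{1}$-component of $d\ln f$, and matching $\Theta$-components yields no new information since $T|\phi|^{2}=\phi_{0}\bar{\phi}+\phi\overline{\phi_{0}}$. Finally I would re-express $\theta^{1},\theta^{\bar 1}$ in the Heisenberg coframe and use the identity $2(d\ln f)_{1}=\mathring{Z}_{1}(f^{2})+\bar{\phi}\,\mathring{Z}_{\bar 1}(f^{2})$ (obtained by writing $df$ in the coframe $\{\mathring{\theta}^{1},\mathring{\theta}^{\bar 1},\Theta\}$ and converting) to rewrite $\theta_{1}{}^{1}$ in terms of $\Theta,\mathring{\theta}^{1},\mathring{\theta}^{\bar 1}$ and the Heisenberg derivatives of $\phi$, keeping the real exact form $-d\ln(1-|\phi|^{2})^{-1/2}$ as a single term; this gives the stated expression.

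The calculation is routine, but the bookkeeping is the real work, and two points deserve care. First, $d\theta^{1}$ does not detect the $\theta^{1}$-component of $\theta_{1}{}^{1}$, so the skew-Hermitian normalization $\theta_{1}{}^{1}+\theta_{\bar 1}{}^{\bar 1}=0$ is genuinely needed to recover it; without this step one cannot pin it down, and the $\mathring{\theta}^{1}$- and $\mathring{\theta}^{\bar 1}$-rows of the final formula would be incomplete. Second, the differentiation of the conformal factor $f=(1-|\phi|^{2})^{-1/2}$ is what produces the apparently second-order terms $\bar{\phi}\,\mathring{Z}_{\bar 1}(1/(1-|\phi|^{2}))$, $\mathring{Z}_{1}(1/(1-|\phi|^{2}))$ and their counterparts in the statement; one must keep track of which coframe each component is referred to and be consistent with the conjugation conventions ($\overline{\phi_{1}}=\phi_{\bar 1}$ replaced by $\bar\phi$, etc.). Once these two points are handled, collecting terms yields Proposition \ref{foofin1}.
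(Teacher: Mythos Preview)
Your approach is correct and is essentially the paper's own: the paper's proof simply says to check directly that the asserted $\theta_{1}{}^{1}$ and $A^{1}{}_{\bar 1}$ satisfy the structure equations \eqref{steq} and to invoke uniqueness, while you derive them from \eqref{steq} instead of verifying them --- two sides of the same computation. Two small slips to clean up: the identity you quote should read $2f\,(d\ln f)_{1}=\mathring{Z}_{1}(f^{2})+\bar{\phi}\,\mathring{Z}_{\bar 1}(f^{2})$ (you dropped a factor of $f$), and $\overline{\phi_{1}}=\bar{\phi}_{\bar 1}$, not $\phi_{\bar 1}$; neither affects the argument.
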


\begin{proof}
One can check directly that $\theta_{1}{}^{1}$ and $A^{1}{}_{\bar 1}$ in (%
\ref{trlaw00}) satisfy the structure equations (\ref{steq}). And by
uniqueness, we complete the proof.
\end{proof}
\begin{prop}\label{foofin2}
Let $R^{\phi,\Theta}$ and $\Delta_{b}^{\phi,\Theta}$ be the Webster
curvature and (negative) sub-laplacian of $(J,\Theta)$ on $U$, respectively.
Then we have
\begin{equation}  \label{trlaw01}
\begin{split}
R^{\phi,\Theta}&=-\mathring{Z}_{\bar 1}\left[\frac{\bar{\phi}_{\bar 1}+\bar{%
\phi}\phi_{1}}{1-|\phi|^{2}}+\bar{\phi}\mathring{Z}_{\bar 1}\left(\frac{1}{%
1-|\phi|^{2}}\right)+\mathring{Z}_{1}\left(\frac{1}{1-|\phi|^{2}}\right)%
\right] \\
&\hspace{4mm}-\mathring{Z}_{1}\left[\frac{\phi_{1}+\phi\bar{\phi}_{\bar 1}}{1-|\phi|^{2}}%
+|\phi|^{2}\mathring{Z}_{\bar 1}\left(\frac{1}{1-|\phi|^{2}}\right)+\phi%
\mathring{Z}_{1}\left(\frac{1}{1-|\phi|^{2}}\right)\right] \\
&\hspace{4mm}+\frac{i\bar{\phi}\phi_{0}}{1-|\phi|^{2}},
\end{split}%
\end{equation}
and
\begin{equation}  \label{trlaw02}
\begin{split}
\Delta_{b}^{\phi,\Theta}u&=\left[\frac{1+|\phi|^2}{1-|\phi|^2}\right]%
\mathring{\Delta}_{b}u-\left[\frac{2\bar{\phi}}{1-|\phi|^2}\right]u_{\bar{1}%
\bar{1}}-\left[\frac{2\phi}{1-|\phi|^2}\right]u_{11} \\
&\hspace{4mm}-\left[\frac{2\bar{\phi}_{\bar 1}+|\phi|^{2}_{1}}{1-|\phi|^{2}}+2\bar{\phi}%
\mathring{Z}_{\bar 1}\left(\frac{1}{1-|\phi|^{2}}\right)+(1+|\phi|^{2})%
\mathring{Z}_{1}\left(\frac{1}{1-|\phi|^{2}}\right)\right]u_{\bar 1} \\
&\hspace{4mm}-\left[\frac{2\phi_{1}+|\phi|^{2}_{\bar 1}}{1-|\phi|^{2}}+(1+|\phi|^{2})%
\mathring{Z}_{\bar 1}\left(\frac{1}{1-|\phi|^{2}}\right)+2\phi\mathring{Z}%
_{1}\left(\frac{1}{1-|\phi|^{2}}\right)\right]u_{1}.
\end{split}%
\end{equation}
\end{prop}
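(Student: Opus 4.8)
The plan is to read off both identities directly from the formula for the connection form $\theta_{1}{}^{1}$ established in Proposition \ref{foofin1}, by carrying out one further differentiation --- exactly in the spirit of the proof of that proposition.

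\emph{The curvature identity} $(\ref{trlaw01})$. In dimension three the Tanaka--Webster scalar curvature is recovered from the pseudohermitian connection form by the curvature structure equation, which in the present normalization reads $d\theta_{1}{}^{1}=R^{\phi,\Theta}\,\theta^{1}\wedge\theta^{\bar 1}$ modulo terms divisible by $\theta$; that is, $R^{\phi,\Theta}$ is the coefficient of $\theta^{1}\wedge\theta^{\bar 1}$ in $d\theta_{1}{}^{1}$. Since $\theta|_{U}=\Theta$, a direct check from the formula for the coframe gives $\theta^{1}\wedge\theta^{\bar 1}=\mathring{\theta}^{1}\wedge\mathring{\theta}^{\bar 1}$ (equivalently, $d\theta=d\Theta$ forces this), so it is enough to extract the $\mathring{\theta}^{1}\wedge\mathring{\theta}^{\bar 1}$--component of $d\theta_{1}{}^{1}$. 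I would differentiate the right-hand side of $(\ref{trlaw00})$ term by term, using $d\mathring{\theta}^{1}=0$, $d\Theta=i\,\mathring{\theta}^{1}\wedge\mathring{\theta}^{\bar 1}$, the expansion $df=(Tf)\Theta+(\mathring{Z}_{1}f)\mathring{\theta}^{1}+(\mathring{Z}_{\bar 1}f)\mathring{\theta}^{\bar 1}$, and the commutator $[\mathring{Z}_{1},\mathring{Z}_{\bar 1}]=-iT$: the exact piece $-d\ln(1-|\phi|^{2})^{-1/2}$ drops out; the $\Theta$--component of $\theta_{1}{}^{1}$ contributes the term $i\bar{\phi}\phi_{0}/(1-|\phi|^{2})$ through $d\Theta$; and differentiating the $\mathring{\theta}^{1}$-- and $\mathring{\theta}^{\bar 1}$--coefficients of $\theta_{1}{}^{1}$ and wedging them with $\mathring{\theta}^{1}$ and $\mathring{\theta}^{\bar 1}$ respectively (the pieces still carrying $\Theta$ feed the torsion-derivative terms of the structure equation and are discarded) reproduces precisely the two outer $\mathring{Z}$--brackets of $(\ref{trlaw01})$. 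Summing the three contributions gives $(\ref{trlaw01})$.

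\emph{The sub-Laplacian identity} $(\ref{trlaw02})$. Write $\Delta_{b}^{\phi,\Theta}u=u_{1\bar 1}+u_{\bar 1 1}$ for the trace of the $(J,\Theta)$--covariant Hessian of $u$, where $u_{1\bar 1}=Z_{\bar 1}(Z_{1}u)-\theta_{1}{}^{1}(Z_{\bar 1})\,Z_{1}u$ and $u_{\bar 1 1}=Z_{1}(Z_{\bar 1}u)-\theta_{\bar 1}{}^{\bar 1}(Z_{1})\,Z_{\bar 1}u$. Substituting $Z_{1}=(1-|\phi|^{2})^{-1/2}(\mathring{Z}_{1}+\bar{\phi}\mathring{Z}_{\bar 1})$ and $Z_{\bar 1}=(1-|\phi|^{2})^{-1/2}(\mathring{Z}_{\bar 1}+\phi\mathring{Z}_{1})$ from $(\ref{defof})$, together with $\theta_{1}{}^{1}$ from $(\ref{trlaw00})$, and expanding by the Leibniz rule, the second-order part produces $\mathring{Z}_{\bar 1}\mathring{Z}_{1}u$ and $\mathring{Z}_{1}\mathring{Z}_{\bar 1}u$, which --- via $[\mathring{Z}_{1},\mathring{Z}_{\bar 1}]=-iT$ --- assemble into $\frac{1+|\phi|^{2}}{1-|\phi|^{2}}\mathring{\Delta}_{b}u$, along with the off-diagonal terms $-\frac{2\phi}{1-|\phi|^{2}}u_{11}$ and $-\frac{2\bar{\phi}}{1-|\phi|^{2}}u_{\bar 1\bar 1}$ coming from the $\mathring{Z}_{1}\mathring{Z}_{1}$ and $\mathring{Z}_{\bar 1}\mathring{Z}_{\bar 1}$ pieces of $Z_{1}Z_{1}$ and $Z_{\bar 1}Z_{\bar 1}$. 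The first-order part collects the terms in which a $\mathring{Z}$ falls on $(1-|\phi|^{2})^{-1/2}$ or on $\phi$, together with the connection contributions $\theta_{1}{}^{1}(Z_{\bar 1})$ and $\theta_{\bar 1}{}^{\bar 1}(Z_{1})$ read off from $(\ref{trlaw00})$; regrouping these produces the coefficients of $u_{1}$ and $u_{\bar 1}$ in $(\ref{trlaw02})$. One should also verify that no transverse derivative $Tu$ survives in the end, as indeed it must not for a genuine sub-Laplacian.

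I expect the main difficulty to be organizational rather than conceptual: each identity is one differentiation beyond the already lengthy structure-equation verification behind Proposition \ref{foofin1}. The delicate points are, first, isolating the genuine curvature --- the $\mathring{\theta}^{1}\wedge\mathring{\theta}^{\bar 1}$--part of $d\theta_{1}{}^{1}$ --- cleanly from the torsion-derivative terms carried by $\Theta$, which is precisely where the normalization $\theta|_{U}=\Theta$ enters; and, second, keeping faithful track of the order of the non-commuting operators $\mathring{Z}_{1}$ and $\mathring{Z}_{\bar 1}$ together with all the connection-form terms, since every reordering spawns a $\phi_{0}=T\phi$ or a $u_{0}=Tu$ contribution and one must check that the spurious transverse derivatives cancel.
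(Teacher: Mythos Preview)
Your proposal is correct and follows essentially the same route as the paper: for $(\ref{trlaw01})$ the paper also invokes Webster's curvature equation $d\theta_{1}{}^{1}\equiv R\,\theta^{1}\wedge\theta^{\bar 1}\ (\mathrm{mod}\ \theta)$, notes $\theta^{1}\wedge\theta^{\bar 1}=\mathring{\theta}^{1}\wedge\mathring{\theta}^{\bar 1}$ and $d\mathring{\theta}^{1}=0$, and differentiates $(\ref{trlaw00})$; for $(\ref{trlaw02})$ it likewise expands $\Delta_{b}^{\phi,\Theta}u$ in terms of $Z_{1},Z_{\bar 1},\theta_{1}{}^{1}$ and computes. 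One minor slip: the statement fixes the \emph{negative} sub-Laplacian, so in the paper's convention $\Delta_{b}^{\phi,\Theta}u=-\big(Z_{\bar 1}Z_{1}u-\theta_{1}{}^{1}(Z_{\bar 1})Z_{1}u\big)+\text{conjugate}$, whereas you wrote the positive trace $u_{1\bar 1}+u_{\bar 1 1}$; your subsequent signs on the $u_{11}$ and $u_{\bar 1\bar 1}$ terms are nonetheless the correct ones, so this is a definitional typo rather than an error of method.
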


\begin{proof}
Recall that S. Webster \cite{Webster} showed that $d\theta _{1}{}^{1}$ can be written as
\begin{equation}
d\theta _{1}{}^{1}=R\theta ^{1}\wedge \theta ^{\bar{1}}\ \ \text{mod}\
\theta ,
\end{equation}%
where $R$ is the Tanaka-Webster scalar curvature. Since $\theta ^{1}$ is an unit coframe,
we have $\theta ^{1}\wedge \theta ^{\bar{1}}=-id\Theta =\mathring{\theta}%
^{1}\wedge \mathring{\theta}^{\bar{1}}$. On the other hand, $d\mathring{%
\theta}^{1}=0$. Hence, (\ref{trlaw01}) follows immediately from (\ref{trlaw00}). For (\ref{trlaw02}), recall that
\begin{equation}
\Delta _{b}^{\phi ,\Theta }u=-\Big(Z_{\bar{1}}Z_{1}u-\theta _{1}{}^{1}(Z_{%
\bar{1}})Z_{1}u\Big)+\ \text{conjugate},
\end{equation}%
and (\ref{trlaw02}) is just a straightforward computation in terms of (\ref%
{trlaw00}).
\end{proof}

\begin{rem}
The first author and I. H. Tsai deduced a more general formula for the
Tanaka-Webster scalar curvature (see (4.6) in \cite{CT}).
\end{rem}

To construct the sequence we want, we also need the following lemma which is a standard
result in the literature (see \cite{Dietrich,Kobayashi}).

\begin{lem}\label{lem1}
\label{kl} For any $\delta>0$, there is a nonnegative function $%
\chi_{\delta}\in C^{\infty}(\mathbb{R})$ such that \newline
(i) $0\leq\chi_{\delta}\leq 1,\ \chi_{\delta}(t)\equiv 1$ in a neighborhood
of $0$ and $\chi_{\delta}(t)\equiv 0$ for $|t|\geq\delta$; \newline
(ii) $|\chi_{\delta}'(t)|\leq\delta t^{-1}$ and $%
|\chi_{\delta}''(t)|\leq\delta t^{-2}$ for all $t$.
\end{lem}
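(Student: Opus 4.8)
The plan is to construct $\chi_\delta$ explicitly from a fixed bump profile and then verify (i) and (ii) by direct estimates, splitting the argument according to the dyadic scale of $|t|$. First I would start from a standard fixed function $\psi \in C^\infty(\mathbb{R})$ with $0 \le \psi \le 1$, $\psi(s) \equiv 1$ for $s \le 1$, and $\psi(s) \equiv 0$ for $s \ge 2$; such a $\psi$ exists by the usual mollification construction. The key idea (due to the logarithmic-cutoff trick in Kobayashi \cite{Kobayashi}) is to cut off not in the variable $t$ but in $\ln|t|$, because $\tfrac{d}{dt}\ln|t| = 1/t$, which is exactly the weight appearing in (ii). Concretely, for a large parameter $N = N(\delta)$ to be chosen, I would set
\begin{equation*}
\chi_\delta(t) = \psi\!\left(\frac{\ln(\delta/|t|)}{-N}\right) \quad \text{more precisely} \quad \chi_\delta(t) = \eta\!\left(\frac{\ln|t| - \ln\delta + N}{N}\right)
\end{equation*}
for $t \ne 0$, where $\eta$ is a fixed smooth profile that is $1$ near $-\infty$-type values and $0$ near the cutoff, and $\chi_\delta(0) := 1$; the point is that $\chi_\delta \equiv 1$ when $|t| \le \delta e^{-N}$ (a neighborhood of $0$) and $\chi_\delta \equiv 0$ when $|t| \ge \delta$. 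Smoothness at $t = 0$ is automatic since $\chi_\delta$ is locally constant there.

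Next I would carry out the derivative estimates. Writing $\chi_\delta(t) = \eta(g(t))$ with $g(t) = (\ln|t| - \ln\delta + N)/N$, we have $g'(t) = 1/(Nt)$ and $g''(t) = -1/(Nt^2)$, so
\begin{equation*}
\chi_\delta'(t) = \eta'(g(t)) \cdot \frac{1}{Nt}, \qquad \chi_\delta''(t) = \eta''(g(t)) \cdot \frac{1}{N^2 t^2} - \eta'(g(t)) \cdot \frac{1}{N t^2}.
\end{equation*}
Setting $C = \max(\|\eta'\|_\infty, \|\eta''\|_\infty)$, we obtain $|\chi_\delta'(t)| \le C/(N|t|)$ and $|\chi_\delta''(t)| \le (C/N^2 + C/N)/t^2 \le 2C/(N t^2)$ for $N \ge 1$. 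Choosing $N = N(\delta)$ large enough that $2C/N \le \delta$ then gives both $|\chi_\delta'(t)| \le \delta/|t|$ and $|\chi_\delta''(t)| \le \delta/t^2$, which is exactly (ii). Property (i) is immediate from the construction: nonnegativity and the bound by $1$ come from $0 \le \eta \le 1$, the value $1$ near $0$ and the vanishing for $|t| \ge \delta$ were built into the choice of $g$.

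The main point to be careful about — rather than a genuine obstacle — is the interplay between the support constraint in (i) and the derivative constraint in (ii): one cannot satisfy $|\chi_\delta'| \le \delta/|t|$ with a naive linear cutoff, because transitioning from $1$ to $0$ over an interval comparable to $\delta$ forces $|\chi_\delta'| \sim 1/\delta$, which violates (ii) precisely near $|t| \sim \delta$. The logarithmic reparametrization resolves this by spreading the transition over the multiplicatively wide band $\delta e^{-N} \le |t| \le \delta$, on which $\int |\chi_\delta'|\,dt$ is controlled while the pointwise bound $1/(N|t|)$ holds throughout; increasing $N$ trades a narrower "core" region where $\chi_\delta \equiv 1$ (still a neighborhood of $0$, which is all that is required) for better derivative bounds. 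I would also note that the same $N$ simultaneously handles the second-derivative bound, so no further adjustment is needed, and the construction patches smoothly across $t < 0$ and $t > 0$ since $\ln|t|$ is even.
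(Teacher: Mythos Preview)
Your argument is correct and is exactly the standard logarithmic-cutoff construction; the paper does not give its own proof of this lemma but simply cites \cite{Dietrich,Kobayashi}, where the same trick appears. Aside from minor expository looseness (the two displayed formulas for $\chi_\delta$ differ by a shift in the argument of the profile, and the statement's bound $\delta t^{-1}$ should be read as $\delta|t|^{-1}$), your choice of $N\ge 2C/\delta$ cleanly delivers both (i) and (ii).
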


Now, for each $\delta>0$, by means of the cut-off function $\chi_{\delta}$, we define the CR structure $J^{\delta}$ with the corresponding deformation tensor $\phi ^{\delta }=(1-\chi _{\delta }(\rho ))\phi$. It is easy to see that $J^{\delta}=J$ outside the $\delta$-ball $B(\delta)$ centered at $0$ and $J^{\delta}=\mathring{J}$ in a neighborhood of $0$, which is CR spherical. If we, in addition, consider $\theta^{\delta}=\Theta$, then it is easy to see that the sequence $(J^{\delta},\theta^{\delta})$ converges to $(J,\theta)$ in $C^{0}$ (Note that we have chosen $\theta$ such that $\theta|_{U}=\Theta$). However, in general, since $R^{\phi,\Theta}(0)$ may not be zero, the corresponding Tanaka-Webster curvature of $(J^{\delta},\theta^{\delta})$  does not converge to the one of $(J,\theta)$. In order to have a sequence we want in Proposition \ref{mainpro}, we need to deform the contact form $\Theta$.
 
 Recall that if we consider the new contact form $\theta^{u}=u^{2}\Theta$, then, on $U$,
we have the following transformation law of the Tanaka-Webster scalar  curvature: (for the details,
see \cite{DT, Jerison&Lee})
\begin{equation}  \label{trlaw03}
R^{\phi,\theta^{u}}=u^{-3}(4\Delta^{\phi,\Theta}_{b}u+R^{\phi,\Theta}u),
\end{equation}
where $R^{\phi,\theta^{u}}$ is the Tanaka-Webster scalar curvature with respect to $(J,\theta^{u})=(J,u^{2}\Theta)$.

On the other hand, the standard CR structure of the Heisenberg group on $U$
is represented by the zero deformation function $\phi\equiv 0$, which is CR
\textbf{spherical}. Let $u$ be a positive function in a neighborhood of $0$
such that $u(0)=1, (\mathring{Z}_{1}u)(0)=(\mathring{Z}_{\bar 1}u)(0)=0$ and
\begin{equation}  \label{trlaw040}
R^{0,\theta^{u}}(0)=R^{\phi,\Theta}(0).
\end{equation}
It follows from (\ref{trlaw03}) and (\ref{trlaw040}) that
\begin{equation}  \label{trlaw041}
R^{0,\theta^{u}}=u^{-3}(4\mathring{\Delta}_{b}u).
\end{equation}

\subsection{Construction of a sequence $(J^{\protect\delta},\protect\theta^{%
\protect\delta})$}

We are now ready to construct  a sequence of pseudohermitian structures we describe in the
beginning of this section. First, we re-formulate (\ref{trlaw01}) and (\ref{trlaw02}%
) as what we need.

\begin{prop}
Let $F=F(\phi)=\left(\frac{1}{1-|\phi|^{2}}\right)^{\frac{1}{2}}$. We have
\begin{equation}  \label{pbf1}
\begin{split}
R^{\phi,\Theta}&=-F^{2}(\phi_{11}+\bar{\phi}_{\bar{1}\bar
1})+\sum_{a,b\in\{1,\bar{1}\}}P_{ab}\phi_{ab}+\sum_{a,b\in\{1,\bar{1}%
\}}Q_{ab}\bar{\phi}_{ab}+P, \\
\Delta_{b}^{\phi,\Theta}u&=F^{2}\mathring{\Delta}_{b}u+\sum_{a,b\in\{1,\bar{1%
}\}}S_{ab}u_{ab}+Su_{1}+\bar{S}u_{\bar 1},
\end{split}%
\end{equation}
where $P_{ab},Q_{ab},P,S_{ab}$ and $S$ are all polynomials in $F,\phi,\bar{%
\phi},\phi_{1},\phi_{\bar 1},\bar{\phi}_{1},\bar{\phi}_{\bar 1}$ such that
\begin{equation}  \label{pbf2}
P_{ab}(0)=Q_{ab}(0) =P(0)=S_{ab}(0)=S(0)=0.
\end{equation}
\end{prop}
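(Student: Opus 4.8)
The plan is to start from the explicit formulas \eqref{trlaw01} and \eqref{trlaw02} and simply expand the derivatives $\mathring{Z}_{\bar 1}\!\left(\frac{1}{1-|\phi|^2}\right)$, $\mathring{Z}_1\!\left(\frac{1}{1-|\phi|^2}\right)$ and the second-order combinations $u_{\bar 1\bar 1}$, $u_{11}$ using the product and chain rules in $H_1$. Writing $G=G(\phi)=\frac{1}{1-|\phi|^2}=F^2$, one has $\mathring{Z}_1 G = G^2(\bar\phi\,\phi_1+\phi\,\bar\phi_1)$ and similarly for $\mathring{Z}_{\bar 1}G$; the only place where a \emph{second} derivative of $\phi$ appears is when the operator $\mathring{Z}_{\bar 1}$ (resp.\ $\mathring{Z}_1$) in the first two lines of \eqref{trlaw01} hits the terms $\bar\phi_{\bar 1}$ and $\phi_1$, producing $\bar\phi_{\bar 1\bar 1}$ and $\phi_{11}$ respectively, each with leading coefficient $G=F^2$ (all other contributions to the coefficient of these two second derivatives carry extra factors of $\phi$ or $\bar\phi$ and will be absorbed into the $P_{ab}$, $Q_{ab}$ sums). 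This gives the $-F^2(\phi_{11}+\bar\phi_{\bar 1\bar 1})$ term; every remaining term of $R^{\phi,\Theta}$ is then a sum of monomials in $F,\phi,\bar\phi,\phi_1,\phi_{\bar 1},\bar\phi_1,\bar\phi_{\bar 1}$ times one of $\phi_{ab}$ or $\bar\phi_{ab}$ ($a,b\in\{1,\bar 1\}$), or with no second-derivative factor at all, which defines the polynomials $P_{ab}$, $Q_{ab}$, $P$. (Here I would note that $|\phi|^2_1=\bar\phi\phi_1+\phi\bar\phi_1$ and that the commutator $[\mathring Z_1,\mathring Z_{\bar 1}]$ produces a $T$-derivative $\phi_0$, so e.g.\ $\phi_{1\bar 1}-\phi_{\bar 1 1}$ is a multiple of $\phi_0$; since $\phi_0$ is a first-order object in the relevant sense it can be carried along in the polynomial coefficients.)

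For the sublaplacian, \eqref{trlaw02} already displays the structure: the coefficient of $\mathring\Delta_b u$ is $\frac{1+|\phi|^2}{1-|\phi|^2}=2F^2-1$, so writing $\frac{1+|\phi|^2}{1-|\phi|^2}\,\mathring\Delta_b u = F^2\mathring\Delta_b u + (F^2-1)\mathring\Delta_b u$ and expanding $\mathring\Delta_b u = -(u_{1\bar 1}+u_{\bar 1 1})$ (up to the convention for the negative sublaplacian) lets me fold the $(F^2-1)\mathring\Delta_b u$ piece into the $\sum S_{ab}u_{ab}$ sum, since $F^2-1 = \frac{|\phi|^2}{1-|\phi|^2}$ vanishes at $0$. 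The $u_{\bar 1\bar 1}$ and $u_{11}$ terms in \eqref{trlaw02} already have coefficients $-\frac{2\bar\phi}{1-|\phi|^2}=-2\bar\phi F^2$ and $-\frac{2\phi}{1-|\phi|^2}=-2\phi F^2$, each a polynomial in our list vanishing at $\phi=0$; and the coefficients of $u_{\bar 1}$ and $u_1$ are, after expanding the $\mathring Z(\tfrac{1}{1-|\phi|^2})$ factors exactly as above, polynomials in $F,\phi,\bar\phi,\phi_1,\phi_{\bar 1},\bar\phi_1,\bar\phi_{\bar 1}$ — and they are conjugates of each other by inspection of \eqref{trlaw02}, giving the claimed form $F^2\mathring\Delta_b u + \sum S_{ab}u_{ab}+Su_1+\bar S u_{\bar 1}$.

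The verification \eqref{pbf2} that all the coefficient polynomials vanish at the origin is then immediate: at $\phi=0$ one has $F=1$ and every monomial making up $P_{ab},Q_{ab},P,S_{ab},S$ contains at least one factor from $\{\phi,\bar\phi,\phi_1,\phi_{\bar 1},\bar\phi_1,\bar\phi_{\bar 1}\}$ (this is visible termwise: in \eqref{trlaw01} every summand other than the two that produced $-F^2(\phi_{11}+\bar\phi_{\bar 1\bar 1})$ carries such a factor, and similarly in \eqref{trlaw02} after the $F^2\mathring\Delta_b u$ split-off). Strictly speaking $F$ is not a polynomial in $\phi,\bar\phi$; the cleanest phrasing is that $P_{ab}$ etc.\ are polynomials in the variables $F,\phi,\bar\phi,\phi_1,\phi_{\bar 1},\bar\phi_1,\bar\phi_{\bar 1}$ treated as independent, with no constant term and with every monomial divisible by one of the six $\phi$-variables, so that substituting $F(0)=1$ and the others $=0$ gives $0$. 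The only real work is the bookkeeping of which monomials land in which coefficient, and the main (very mild) obstacle is keeping the commutator terms — the places where reordering $\mathring Z_1,\mathring Z_{\bar 1}$ generates a $\phi_0=T\phi$ — correctly accounted for so that the symmetry in the indices $a,b$ and the conjugation relation $u_1\leftrightarrow u_{\bar 1}$ coefficients come out as stated; none of this is conceptually deep, so I would simply remark that it is a direct computation from Propositions \ref{foofin1} and \ref{foofin2}.
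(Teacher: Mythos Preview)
Your approach is exactly the paper's: the proposition is presented there as a mere re-formulation of \eqref{trlaw01} and \eqref{trlaw02}, with no separate proof, and what you outline is precisely that expansion. The identification of $-F^{2}(\phi_{11}+\bar\phi_{\bar 1\bar 1})$ as the only second-order term whose coefficient survives at $\phi=0$, and the split $\frac{1+|\phi|^{2}}{1-|\phi|^{2}}=F^{2}+(F^{2}-1)$ for the sublaplacian, are the right moves.

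One small bookkeeping slip: you say the $\phi_{0}$ arising from $\frac{i\bar\phi\phi_{0}}{1-|\phi|^{2}}$ (and from commutators) can be ``carried along in the polynomial coefficients,'' but the stated variable list for $P_{ab},Q_{ab},P$ does \emph{not} include $\phi_{0}$. The correct disposal is the one you already have in hand: use the commutator $[\mathring Z_{1},\mathring Z_{\bar 1}]=-iT$ in reverse to write $\phi_{0}$ as a constant multiple of $\phi_{1\bar 1}-\phi_{\bar 1 1}$, and then absorb $\frac{i\bar\phi\phi_{0}}{1-|\phi|^{2}}$ into the $\sum P_{ab}\phi_{ab}$ term with coefficients proportional to $\bar\phi F^{2}$, which vanish at $0$ as required. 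With that adjustment the argument is complete.
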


Since $F(0)=1$, condition (\ref{pbf2}) means that each polynomial does not
include monomial terms $F^{k}$ for some nonnegative integer $k$.

Now, for each $\delta>0$, we define a pseudohermitian structure $(J^{\delta
},\theta ^{\delta })$ by
\begin{equation}\label{3.14}
\begin{split}
\phi ^{\delta }& =(1-\chi _{\delta }(\rho ))\phi ,\ \ \text{which is the
deformation tensor of}\ J^{\delta }; \\
\theta ^{\delta }& =(1-\chi _{\delta }(\rho ))\Theta +\chi _{\delta }(\rho
)(\theta ^{u})=(v^{\delta })^{2}\Theta, \text{ where }\theta ^{u}=u^{2}\Theta~~\mbox{ and }~~(v^{\delta })^{2}=1+\chi _{\delta
}(u^{2}-1).
\end{split}%
\end{equation}%
Here, $\chi _{\delta}$ is the function constructed in Lemma \ref{kl}.
It follows from Lemma \ref{kl} that $(\phi ^{\delta },\theta ^{\delta })=(\phi ,\Theta )$
outside the $\delta$-ball $B(\delta )$ centered at $0$, and $(\phi ^{\delta
},\theta ^{\delta })=(0,\theta ^{u})$ in a neighborhood of $0$. Moreover,
by (\ref{3.1}), we have
\begin{equation}
\begin{split}
& R^{0,\theta ^{u}}(0)=R^{\phi ,\Theta }(0), \\
& \phi (0)=\phi _{1}(0)=\phi _{\bar{1}}(0)=0, \\
& u(0)=1,\ u_{1}(0)=u_{\bar{1}}(0)=0.
\end{split}
\label{basass}
\end{equation}%
Notice that we have used the cut-off function $\chi _{\delta}$ to take the average of the two structures $(J,\theta)$ and $(\mathring{J},\theta^{u})$ on $U$, instead of the standard structure $(\mathring{J},\Theta)$, so that we have the first equation of (\ref{basass}) which will make sure later that we have Proposition \ref{mainpro}.

\subsection{Some uniform bounds} In this subsection, we provide some uniform bounds of invariant functions, which will be used in the proof of Proposition \ref{mainpro}. Define
\begin{equation*}
F^{\delta }=\left( \frac{1}{1-|\phi ^{\delta }|^{2}}\right) ^{\frac{1}{2}}.
\end{equation*}%
Since $|\phi^{\delta }|\leq |\phi |$ by (\ref{3.14}), we have $1\leq |F^{\delta }|\leq |F|$, and hence $|F^{\delta }|$ \textbf{has an uniform bound}. By (\ref{3.14}), we have $(v^{\delta })^{2}-u^{2}=(1-\chi _{\delta })(1-u^{2})$,
which together with Lemma \ref{kl}
implies
\begin{equation}
u^{2}-|u^{2}-1|\leq (v^{\delta })^{2}\leq u^{2}+|u^{2}-1|.  \label{pbf3}
\end{equation}%
This implies that $v^{\delta }$ \textbf{has an uniform bound}. 
It follows from the definition of $\phi^\delta$ in (\ref{3.14}) that, for $a,b\in
\{1,\bar{1}\}$, the derivatives of $\phi^\delta$ are given by
\begin{equation}
\begin{split}
\phi _{a}^{\delta }& =-(\chi _{\delta })_{a}\phi +(1-\chi _{\delta })\phi
_{a}, \\
\phi _{ab}^{\delta }& =-(\chi _{\delta })_{ab}\phi -(\chi _{\delta
})_{a}\phi _{b}-(\chi _{\delta })_{b}\phi _{a}+(1-\chi _{\delta })\phi _{ab}.
\end{split}
\label{pbf4}
\end{equation}%
By (\ref{basass}), (\ref{pbf4}) and Lemma {\ref{kl}, $|\phi _{ab}|$ \textbf{has an uniform upper bound} for each $a,b\in \{1,%
\bar{1}\}$.  Also, for $a,b\in
\{1,\bar{1}\}$, it follows from (\ref{3.14}) that the derivatives of $v^\delta$ are given by
\begin{equation}
\begin{split}
(v^{\delta })_{a}& =\frac{1}{2}\frac{(\chi _{\delta })_{a}(u^{2}-1)+\chi
_{\delta }(u^{2})_{a}}{v^{\delta }}, \\
(v^{\delta })_{ab}& =\frac{1}{2}\frac{(\chi _{\delta })_{ab}(u^{2}-1)+(\chi
_{\delta })_{a}(u^{2})_{b}+(\chi _{\delta })_{b}(u^{2})_{a}+\chi _{\delta
}(u^{2})_{ab}}{v^{\delta }} \\
&\hspace{4mm} -\frac{1}{4}\frac{\left( (\chi _{\delta })_{a}(u^{2}-1)+\chi _{\delta
}(u^{2})_{a}\right) \left( (\chi _{\delta })_{b}(u^{2}-1)+\chi _{\delta
}(u^{2})_{b}\right) }{(v^{\delta })^{3}}.
\end{split}
\label{pbf5}
\end{equation}%
For the same reason, (\ref{basass}), (\ref{pbf5}) together with
Lemma {\ref{kl} show that $|(v^{\delta })_{a}|,|(v^{\delta })_{ab}|$ \textbf{%
has an uniform upper bound} for each $a,b\in \{1,\bar{1}\}$. }}

\subsection{Statement and Proof of Proposition \ref{mainpro}.} The sequence we construct in (\ref{3.14}) satisfies suitable properties which we summarize in Proposition \ref{mainpro}.
\begin{prop}\label{mainpro}
The sequence $\{(\phi^{\delta},\theta^{\delta})\}$ converges to $%
(\phi,\Theta)$ in $C^{0}$. The corresponding Tanaka-Webster scalar curvature $%
R^{\phi^{\delta},\theta^{\delta}}$ also converges to $R^{\phi,\Theta}$ in $%
C^{0}$.
\end{prop}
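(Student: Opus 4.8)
The plan is to prove the two assertions separately; the first is routine and the content lies in the second. The $C^{0}$-convergence $(\phi^{\delta},\theta^{\delta})\to(\phi,\Theta)$ is immediate: by (\ref{3.14}), $\phi^{\delta}-\phi=-\chi_{\delta}(\rho)\phi$ and $\theta^{\delta}-\Theta=\chi_{\delta}(\rho)(u^{2}-1)\Theta$ are supported in $B(\delta)$ with $0\le\chi_{\delta}\le 1$, so their $C^{0}$-norms are at most $\sup_{B(\delta)}|\phi|$ and $\sup_{B(\delta)}|u^{2}-1|$, which tend to $0$ as $\delta\to 0$ by continuity of $\phi,u$ and $\phi(0)=0$, $u(0)=1$ (see (\ref{basass})). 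For the curvature, note that $(\phi^{\delta},\theta^{\delta})$ equals $(\phi,\Theta)$ outside $B(\delta)$ and equals $(0,\theta^{u})$ near $0$; hence $R^{\phi^{\delta},\theta^{\delta}}=R^{\phi,\Theta}$ outside $B(\delta)$, and $R^{\phi^{\delta},\theta^{\delta}}(0)=R^{0,\theta^{u}}(0)=R^{\phi,\Theta}(0)$ by (\ref{basass}). Since $R^{\phi,\Theta}$ is continuous and $B(\delta)$ shrinks to $\{0\}$, it suffices to prove $\sup_{B(\delta)}|R^{\phi^{\delta},\theta^{\delta}}-R^{\phi,\Theta}(0)|\to 0$; for this I would use the conformal law (\ref{trlaw03}), $R^{\phi^{\delta},\theta^{\delta}}=(v^{\delta})^{-3}(4\Delta_{b}^{\phi^{\delta},\Theta}v^{\delta}+R^{\phi^{\delta},\Theta}v^{\delta})$, after expanding $R^{\phi^{\delta},\Theta}$ and $\Delta_{b}^{\phi^{\delta},\Theta}v^{\delta}$ by (\ref{pbf1}) with $\phi$ replaced by $\phi^{\delta}$ and $u$ by $v^{\delta}$.

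The estimates this requires, all uniform on $B(\delta)$ as $\delta\to 0$ and all obtained by the method already used for the uniform bounds above, are: $F^{\delta}\to 1$, $v^{\delta}\to 1$ (hence $(v^{\delta})^{-3}\to 1$); $\phi^{\delta}\to 0$, $\phi^{\delta}_{a}\to 0$, $(v^{\delta})_{a}\to 0$; and the second derivatives $\phi^{\delta}_{ab},(v^{\delta})_{ab}$ stay uniformly bounded, with moreover
\[
\phi^{\delta}_{ab}=(1-\chi_{\delta})\phi_{ab}+o(1),\qquad \mathring{\Delta}_{b}v^{\delta}=\chi_{\delta}\mathring{\Delta}_{b}u+o(1).
\]
These follow from (\ref{basass}), (\ref{pbf4}), (\ref{pbf5}), the bounds $|\chi_{\delta}'(\rho)|\le\delta/\rho$, $|\chi_{\delta}''(\rho)|\le\delta/\rho^{2}$ of Lemma \ref{kl}, and the standard bounds $|\mathring{Z}_{a}\rho|\le C$, $|\mathring{Z}_{a}\mathring{Z}_{b}\rho|\le C\rho^{-1}$ for the gauge (so $|(\chi_{\delta})_{a}|\le C\delta/\rho$ and $|(\chi_{\delta})_{ab}|\le C\delta/\rho^{2}$): every potentially singular factor $(\chi_{\delta})_{a}$ or $(\chi_{\delta})_{ab}$ is paired with one of $\phi,\phi_{a},u^{2}-1,(u^{2})_{a}$, and the vanishing at $0$ of $\phi,\phi_{1},\phi_{\bar 1}$ and of $u-1,u_{1},u_{\bar 1}$ (which gives $|\phi|,|u^{2}-1|\le C\rho^{2}$ and $|\phi_{a}|,|(u^{2})_{a}|\le C\rho$ near $0$) absorbs the $\rho$-singularity and leaves a remainder of size $O(\delta)$.

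With these, and since every coefficient $P_{ab},Q_{ab},P,S_{ab},S$ in (\ref{pbf1}) vanishes at $\phi=0$ by (\ref{pbf2}) --- hence is $o(1)$ on $B(\delta)$ while multiplying a bounded quantity --- and since $(F^{\delta})^{2}\to 1$, $v^{\delta}\to 1$, I obtain
\[
4\Delta_{b}^{\phi^{\delta},\Theta}v^{\delta}+R^{\phi^{\delta},\Theta}v^{\delta}=4\mathring{\Delta}_{b}v^{\delta}-\bigl(\phi^{\delta}_{11}+\bar\phi^{\delta}_{\bar 1\bar 1}\bigr)+o(1)=4\chi_{\delta}\mathring{\Delta}_{b}u-(1-\chi_{\delta})\bigl(\phi_{11}+\bar\phi_{\bar 1\bar 1}\bigr)+o(1).
\]
Replacing $\mathring{\Delta}_{b}u$ and $\phi_{11}+\bar\phi_{\bar 1\bar 1}$ by their values at $0$ (legitimate uniformly on the shrinking ball, as $\chi_{\delta},1-\chi_{\delta}\in[0,1]$) and using $4\mathring{\Delta}_{b}u(0)=R^{0,\theta^{u}}(0)=R^{\phi,\Theta}(0)$ (from (\ref{trlaw041}) and (\ref{basass})) together with $\phi_{11}(0)+\bar\phi_{\bar 1\bar 1}(0)=-R^{\phi,\Theta}(0)$ (evaluate (\ref{pbf1}) at $0$, using (\ref{pbf2}) and (\ref{basass})), the right-hand side becomes $\chi_{\delta}R^{\phi,\Theta}(0)+(1-\chi_{\delta})R^{\phi,\Theta}(0)+o(1)=R^{\phi,\Theta}(0)+o(1)$. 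Multiplying by $(v^{\delta})^{-3}=1+o(1)$ then gives $R^{\phi^{\delta},\theta^{\delta}}=R^{\phi,\Theta}(0)+o(1)$ uniformly on $B(\delta)$, as required.

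I expect the main obstacle to be exactly the middle displayed identity: the second horizontal derivatives $\phi^{\delta}_{ab}$ and $(v^{\delta})_{ab}$ do not tend to $0$, so they cannot simply be frozen at $0$; one must track that they converge to $(1-\chi_{\delta})\phi_{ab}$ and to the combination producing $\chi_{\delta}\mathring{\Delta}_{b}u$, respectively, and it is precisely the choice to average against $\theta^{u}$ --- arranged so that $R^{0,\theta^{u}}(0)=R^{\phi,\Theta}(0)$ --- that causes these two contributions to recombine, via $\chi_{\delta}+(1-\chi_{\delta})=1$, into the single value $R^{\phi,\Theta}(0)$.
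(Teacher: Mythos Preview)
Your proposal is correct and follows essentially the same approach as the paper: reduce to $\sup_{B(\delta)}|R^{\phi^{\delta},\theta^{\delta}}-R^{\phi,\Theta}(0)|\to 0$, apply the conformal law (\ref{trlaw03}) with $\theta^{\delta}=(v^{\delta})^{2}\Theta$, expand $R^{\phi^{\delta},\Theta}$ and $\Delta_{b}^{\phi^{\delta},\Theta}v^{\delta}$ via (\ref{pbf1}), discard the $P_{ab},Q_{ab},P,S_{ab},S$ terms using (\ref{pbf2}), and identify the surviving combination $4\chi_{\delta}\mathring{\Delta}_{b}u-(1-\chi_{\delta})(\phi_{11}+\bar\phi_{\bar1\bar1})$ with $R^{\phi,\Theta}(0)$ using (\ref{trlaw040}), (\ref{trlaw041}) and (\ref{pbf1}) at $0$. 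The paper packages these same steps as the explicit chain of inequalities (\ref{kees}) with $O(\delta)$ remainders rather than your $o(1)$ notation, but the argument is the same.
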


\begin{proof}
From the construction of $(\phi ^{\delta },\theta ^{\delta })$ in (\ref{3.14}), and noting
that $\phi (0)=0$ and $u(0)=1$, one can show that $\{(\phi ^{\delta
},\theta ^{\delta })\}$ converges to $(\phi ,\Theta )$ in $C^{0}$. Therefore
we only need to show that $R^{\phi ^{\delta },\theta ^{\delta }}$ converges
to $R^{\phi ,\Theta }$ in $C^{0}$.

Since $\theta^{\delta}=(v^{%
\delta})^{2}\Theta$ where $(v^{\delta})^{2}=1+\chi_{\delta}(u^{2}-1)$,
it follows from  the transformation law  of the Tanaka-Webster scalar curvature (\ref{trlaw03})
that
\begin{equation}  \label{pbf6}
\begin{split}
|R^{\phi^{\delta},\theta^{\delta}}-R^{\phi,\Theta}|&=\left|4\frac{%
\Delta_{b}^{\phi^{\delta},\Theta}v^{\delta}}{(v^{\delta})^{3}}+\frac{%
R^{\phi^{\delta},\Theta}}{(v^{\delta})^{2}}-R^{\phi,\Theta}\right| \\
&\leq\left|4\frac{\Delta_{b}^{\phi^{\delta},\Theta}v^{\delta}}{%
(v^{\delta})^{3}}+\frac{R^{\phi^{\delta},\Theta}}{(v^{\delta})^{2}}%
-R^{\phi,\Theta}(0)\right|+\Big|R^{\phi,\Theta}(0)-R^{\phi,\Theta}\Big|.
\end{split}%
\end{equation}
From above, we know that $v^{\delta}$ has an uniform bound and all $|\phi_{ab}|,|(v^{%
\delta})_{a}|,|(v^{\delta})_{ab}|$ and $|F^{\delta}|$ has an uniform upper
bound. Using (\ref{pbf1}) with $\phi, u$ replaced by $%
\phi^{\delta}, v^{\delta}$ respectively, together with (\ref{basass}), (\ref%
{pbf3}) and Lemma \ref{kl}, we have
\begin{equation}  \label{kees}
\begin{split}
&\left|4\frac{\Delta_{b}^{\phi^{\delta},\Theta}v^{\delta}}{(v^{\delta})^{3}}+%
\frac{R^{\phi^{\delta},\Theta}}{(v^{\delta})^{2}}-R^{\phi,\Theta}(0)\right|
\\
\leq&\left|(F^{\delta})^{2}\left(\frac{4\mathring{\Delta}_{b}v^{\delta}}{%
(v^{\delta})^3}\right)-\frac{(F^{\delta})^{2}(\bar{\phi^{\delta}}_{\bar{1}%
\bar{1}}+\phi^{\delta}_{11})}{(v^{\delta})^{2}}-R^{\phi,\Theta}(0)\right|+C%
\delta, \\
\leq&C\left|\left(\frac{4\mathring{\Delta}_{b}v^{\delta}}{(v^{\delta})^3}%
\right)-\frac{(\bar{\phi^{\delta}}_{\bar{1}\bar{1}}+\phi^{\delta}_{11})}{%
(v^{\delta})^{2}}-R^{\phi,\Theta}(0)\right|+C\delta, \\
\leq&C\left|\chi_{\delta}\left(\frac{u}{v^{\delta}}\right)\left(\frac{4%
\mathring{\Delta}_{b}v^{\delta}}{u^3}\right)-\frac{(\bar{\phi^{\delta}}_{%
\bar{1}\bar{1}}+\phi^{\delta}_{11})}{(v^{\delta})^{2}}-R^{\phi,\Theta}(0)%
\right|+C\delta,\ \text{by}\ (\ref{pbf5})\ \text{and Lemma}\ \ref{kl}, \\
\leq&C\left|\chi_{\delta}\left(\frac{4\mathring{\Delta}_{b}u}{u^3}%
\right)-(1-\chi_{\delta})(\bar{\phi}_{\bar{1}\bar{1}}+\phi_{11})-R^{\phi,%
\Theta}(0)\right|+C\delta, \\
\leq&C\left|\chi_{\delta}\left(\frac{4\mathring{\Delta}_{b}u}{u^3}%
\right)+\chi_{\delta}(\bar{\phi}_{\bar{1}\bar{1}}+\phi_{11})\right|+\left|-(%
\bar{\phi}_{\bar{1}\bar{1}}+\phi_{11})-R^{\phi,\Theta}(0)\right|+C\delta, \\
\leq&C\delta,\ \ \text{by}\ (\ref{trlaw040}), (\ref{trlaw041})\ \text{and}\ (%
\ref{pbf1}).
\end{split}%
\end{equation}
for some positive constant $C$. Combining (\ref{pbf6}) and (\ref{kees}), we
complete the proof of the proposition.
\end{proof}

\subsection{Proof of Theorem A}

Now we are ready to prove Theorem A. It suffices to prove the following proposition.

\begin{prop}
\label{proflim} Let $\lambda(M,J^{\delta})$ be the CR Yamabe constant with
respect to $J^{\delta}$, we have
\begin{equation}
\lim_{\delta\rightarrow 0}\lambda(M,J^{\delta})=\lambda(M,J).
\end{equation}
\end{prop}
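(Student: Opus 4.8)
The plan is to show that the CR Yamabe constant is continuous with respect to $C^0$-convergence of the pseudohermitian data established in Proposition \ref{mainpro}. Write $\lambda_\delta := \lambda(M,J^\delta)$ and recall that since $J^\delta = J$ and $\theta^\delta = \theta$ outside the small ball $B(\delta)$, everything happens on the fixed compact manifold $M$, and the only region where the structures differ shrinks to the point $p$ as $\delta\to 0$. For the energy functional, I would work with a \emph{fixed} background contact form, say $\theta$ on $M$ (recall $\theta|_U = \Theta$), and express
\[
E_{\theta^\delta}(u) = \int_M\Big(4|\nabla_b^\delta u|^2_{\theta^\delta} + R^{\phi^\delta,\theta^\delta} u^2\Big)\,dV_{\theta^\delta},
\qquad
\lambda_\delta = \inf_{u>0}\frac{E_{\theta^\delta}(u)}{\big(\int_M u^4\,dV_{\theta^\delta}\big)^{1/2}},
\]
and compare termwise with the $\delta=0$ (i.e.\ $(J,\theta)$) functional. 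By Proposition \ref{mainpro}, $R^{\phi^\delta,\theta^\delta}\to R^{\phi,\Theta}$ in $C^0$; the volume forms $dV_{\theta^\delta} = (v^\delta)^4\,dV_\Theta$ converge in $C^0$ because $v^\delta$ has a uniform bound and $(v^\delta)^2 = 1 + \chi_\delta(u^2-1)\to 1$ pointwise with the difference supported in $B(\delta)$; and the Levi metrics $L_{\theta^\delta}$ converge in $C^0$ to $L_\theta$ because both the conformal factor $(v^\delta)^2$ and the CR structure $J^\delta$ (equivalently the deformation tensor $\phi^\delta = (1-\chi_\delta)\phi\to\phi$) converge in $C^0$. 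Hence there are constants $\varepsilon(\delta)\to 0$ such that $(1-\varepsilon(\delta))\,dV_\theta \le dV_{\theta^\delta}\le (1+\varepsilon(\delta))\,dV_\theta$, $|R^{\phi^\delta,\theta^\delta} - R^{\phi,\Theta}|\le\varepsilon(\delta)$ uniformly, and $(1-\varepsilon(\delta))|\nabla_b u|^2_\theta \le |\nabla_b^\delta u|^2_{\theta^\delta}\le (1+\varepsilon(\delta))|\nabla_b u|^2_\theta$ pointwise.

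The upper bound $\limsup_{\delta\to 0}\lambda_\delta \le \lambda(M,J)$ is the easy direction: given $\eta>0$, pick a smooth $u_0>0$ with $E_\theta(u_0) / (\int_M u_0^4\,dV_\theta)^{1/2} < \lambda(M,J)+\eta$; feeding this same $u_0$ into the $\delta$-functional and using the uniform estimates above gives $\lambda_\delta \le E_{\theta^\delta}(u_0)/(\int_M u_0^4 dV_{\theta^\delta})^{1/2}\le \lambda(M,J)+\eta + o(1)$, and let $\delta\to 0$ then $\eta\to 0$. The reverse inequality $\liminf_{\delta\to 0}\lambda_\delta\ge \lambda(M,J)$ requires a minimizer or near-minimizer for $\lambda_\delta$: by Jerison--Lee, since $\lambda_\delta < \lambda(S^{2n+1})$ (the positivity and the strict inequality in low dimension, or at worst a near-minimizing sequence $u_\delta$ normalized by $\int_M u_\delta^4\,dV_{\theta^\delta} = 1$), one has $E_{\theta^\delta}(u_\delta)\le \lambda_\delta + o(1)$. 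The uniform curvature bound plus positivity of $\lambda_\delta$ forces a uniform $H^1$-type bound $\int_M |\nabla_b^\delta u_\delta|^2_{\theta^\delta}\,dV_{\theta^\delta}\le C$ (for $\delta$ small, using that the $R^{\phi^\delta,\theta^\delta} u_\delta^2$ term is controlled in $L^1$ by the uniform $C^0$-bound on curvature and the normalized $L^4\supset L^2$ bound on $u_\delta$). Then comparing with the fixed metric $\theta$:
\[
\lambda(M,J)\le \frac{E_\theta(u_\delta)}{\big(\int_M u_\delta^4\,dV_\theta\big)^{1/2}}
\le \frac{(1+\varepsilon(\delta))E_{\theta^\delta}(u_\delta) + \varepsilon(\delta)\int_M u_\delta^2\,dV_{\theta^\delta}}{(1-\varepsilon(\delta))^{1/2}}
\le \lambda_\delta + o(1),
\]
where the cross term $\int_M u_\delta^2\,dV_{\theta^\delta}\le (\int u_\delta^4 dV_{\theta^\delta})^{1/2}\,\mathrm{Vol}(M,\theta^\delta)^{1/2}\le C$ by Cauchy--Schwarz. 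Combining the two inequalities gives $\lim_{\delta\to 0}\lambda_\delta = \lambda(M,J)$.

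I expect the main obstacle to be the lower-bound direction, specifically handling the near-minimizers $u_\delta$: one must ensure the test-function quotients do not degenerate as $\delta\to 0$, i.e.\ that the $\int u_\delta^4$ normalization survives passage to the fixed metric (no mass escapes to zero) and that the gradient and curvature terms remain uniformly controlled despite $J^\delta$ and $\theta^\delta$ varying on $B(\delta)$. This is where the uniform bounds assembled in the ``Some uniform bounds'' subsection ($|F^\delta|$, $v^\delta$, $|\phi^\delta_{ab}|$, $|(v^\delta)_a|$, $|(v^\delta)_{ab}|$ all uniformly bounded) do the real work: they convert the $C^0$-closeness of the structures into genuine two-sided comparisons of the functionals that are uniform in the test function, so that infimizing commutes with $\delta\to 0$. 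The rest — the volume and Levi-metric convergence and the two $\varepsilon$--$\delta$ estimates — is routine once these uniform bounds are in hand.
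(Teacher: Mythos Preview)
Your proposal is correct and follows essentially the same approach as the paper: both directions use near-minimizers (the paper's $u^\delta$ and $u^0$) and the uniform $C^0$ comparisons of volume forms, curvatures, and subgradient norms (the paper isolates your pointwise Levi-metric inequality as Lemma~\ref{kele1}, proved via the explicit deformation-tensor formulas). Your aside about Jerison--Lee minimizers is unnecessary---near-minimizers suffice throughout, exactly as you fall back to---but otherwise the argument matches.
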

\begin{proof}
Recall that we have constructed a sequence $(J^{\delta},\theta^{\delta})$
which converges to $(J,\theta)$ in $C^0$. In addition, $(J^{\delta},\theta^{%
\delta})=(J,\theta)$ outside the ball $B(\delta)$, and
\begin{equation*}
\phi^{\delta}=(1-\chi_\delta(\rho))\phi,\ \theta^{\delta}=(v^{\delta})^{2}\Theta \
\ \text{in}\ B(\delta).
\end{equation*}
Notice that we have chosen the contact form $\theta$ such that $%
\theta|_{B(\delta)}=\Theta$. Let $dV=\theta\wedge d\theta$ and $%
dV^{\delta}=\theta^{\delta}\wedge d\theta^{\delta}=(v^{\delta})^{4}dV$. And
let $R=R^{J,\theta}=R^{\phi,\Theta}$ and $R^{\delta}=R^{J^{\delta},\theta^{%
\delta}}=R^{\phi^{\delta},\theta^{\delta}}$. Since $(J^{\delta},\theta^{%
\delta})\rightarrow(J,\theta)$ in $C^0$, it is easy to see that, for any $%
\varepsilon>0$ with $\varepsilon\ll 1$, if $\delta$ is small enough, then we have
\begin{equation}\label{3.22}
|(v^{\delta})^{\pm4}-1|\leq \varepsilon,\ \ |R-R^{\delta}|\leq\varepsilon,
\end{equation}
and hence $|R^{\delta}|$ \textbf{has an uniform bound}. Next we need the
following:

\begin{lem}
\label{kele1} Given $\varepsilon>0$, if $\delta$ is small enough, then we
have
\begin{equation}
\frac{1}{(1+\varepsilon)}|\nabla^{\delta}_{b}u|_{\delta}^{2}\leq|%
\nabla_{b}u|^{2}\leq(1+\varepsilon)|\nabla^{\delta}_{b}u|_{\delta}^{2}.
\end{equation}
\end{lem}
\begin{proof}[Proof of Lemma \protect\ref{kele1}:]
First, by (\ref{defof}), we have
\begin{equation*}
\begin{split}
|\nabla_{b}u|^{2}&=2F^{2}((1+|\phi|^{2})|u_{\bar 1}|^{2}+\phi(u_{1})^{2}+%
\bar{\phi}(u_{\bar 1})^{2}), \\
|\nabla^{\delta}_{b}u|_{\delta}^{2}&=2(F^{\delta})^{2}((1+|\phi^{%
\delta}|^{2})|u_{\bar 1}|^{2}+\phi^{\delta}(u_{1})^{2}+\bar{\phi}%
^{\delta}(u_{\bar 1})^{2})(v^{\delta})^{-2}.
\end{split}%
\end{equation*}
Therefore, whenever $u_{\bar 1}\neq 0$, we have
\begin{equation*}
\begin{split}
\frac{|\nabla_{b}u|^{2}}{|\nabla^{\delta}_{b}u|_{\delta}^{2}}%
&=(v^{\delta})^{2}\frac{F^{2}\left((1+|\phi|^{2})+\phi\frac{(u_{1})^{2}}{%
|u_{\bar 1}|^{2}}+\bar{\phi}\frac{(u_{\bar 1})^{2}}{|u_{\bar 1}|^{2}}\right)%
}{(F^{\delta})^{2}\left((1+|\phi^{\delta}|^{2})+\phi^{\delta}\frac{%
(u_{1})^{2}}{|u_{\bar 1}|^{2}}+\bar{\phi}^{\delta}\frac{(u_{\bar 1})^{2}}{%
|u_{\bar 1}|^{2}}\right)} \\
&\leq(v^{\delta})^{2}\frac{F^{2}(1+|\phi|^{2}+2|\phi|)}{(F^{\delta})^{2}(1+|%
\phi^{\delta}|^{2}-2|\phi^{\delta}|)} \\
&=(v^{\delta})^{2}\frac{F^{2}(1+|\phi|)^{2}}{(F^{\delta})^{2}(1-|\phi^{%
\delta}|)^{2}} \\
&\leq 1+%
\varepsilon,
\end{split}%
\end{equation*}
if $\delta$ is small enough (since $\phi(0)=0$). Similarly, we have
\begin{equation*}
\frac{|\nabla^{\delta}_{b}u|_{\delta}^{2}}{|\nabla_{b}u|^{2}}%
\leq 1+\varepsilon.
\end{equation*}
We have thus completed the proof of Lemma \ref{kele1}.
\end{proof}

Now, for each $\delta>0$, there exists a function $u^{\delta}$ such that
\begin{equation*}
\int_{M}(u^{\delta})^4 dV^{\delta}=1~~\mbox{ and }~~
\lambda(M,J^{\delta})\leq
E_{\theta^{\delta}}(u^{\delta})\leq\lambda(M,J^{\delta})+\varepsilon.
\end{equation*}
We are going to estimate
\begin{equation}  \label{qoe1}
\begin{split}
E_{\theta}(u^{\delta})&=\int_{M}(4|\nabla_{b}u^{\delta}|^{2}+R(u^{%
\delta})^{2})\ dV \\
&=E_{\theta^{\delta}}(u^{\delta})+4\left(\int_{M}|\nabla_{b}u^{\delta}|^{2}\
dV-\int_{M}|\nabla^{\delta}_{b}u^{\delta}|_{\delta}^{2}\ dV^{\delta}\right)
\\
&\ \ +\left(\int_{M}R(u^{\delta})^{2}dV-\int_{M}R^{\delta}(u^{\delta})^{2}\
dV^{\delta}\right).
\end{split}%
\end{equation}
By (\ref{3.22}) and H\"{o}lder inequality, we have
\begin{equation}  \label{qoe2}
\begin{split}
&\int_{M}R(u^{\delta})^{2}dV-\int_{M}R^{\delta}(u^{\delta})^{2}\ dV^{\delta}
\\
=&\int_{M}R(u^{\delta})^{2}(dV-dV^{\delta})+\int_{M}(R-R^{\delta})(u^{%
\delta})^{2}\ dV^{\delta} \\
=&\int_{M}R(u^{\delta})^{2}((v^{\delta})^{-4}-1)dV^{\delta}+\int_{M}(R-R^{%
\delta})(u^{\delta})^{2}\ dV^{\delta} \\
\leq&C\varepsilon,\ \ \text{for some positive constant}\ C,
\end{split}%
\end{equation}
and, by (\ref{3.22}) and Lemma \ref{kele1}, we have
\begin{equation}  \label{qoe3}
\begin{split}
&\int_{M}|\nabla_{b}u^{\delta}|^{2}\
dV-\int_{M}|\nabla^{\delta}_{b}u^{\delta}|_{\delta}^{2}\ dV^{\delta} \\
=&\int_{M}\left(|\nabla_{b}u^{\delta}|^{2}(v^{\delta})^{-4}-|\nabla^{%
\delta}_{b}u^{\delta}|_{\delta}^{2}\right)\ dV^{\delta} \\
\leq&\int_{M}\left((1+\varepsilon)|\nabla^{\delta}_{b}u^{\delta}|_{%
\delta}^{2}(v^{\delta})^{-4}-|\nabla^{\delta}_{b}u^{\delta}|_{\delta}^{2}%
\right)\ dV^{\delta} \\
=&\int_{M}|\nabla^{\delta}_{b}u^{\delta}|_{\delta}^{2}\Big(\lbrack
(v^{\delta})^{-4}-1]+\varepsilon(v^{\delta})^{-4}\Big)\ dV^{\delta} \\
\leq&C\varepsilon E_{\theta^{\delta}}(u^{\delta}),\ \ \text{for some}\ C>0,
\end{split}%
\end{equation}
since $|\int_{M}R^{\delta}(u^{\delta})^{2}\ dV^{\delta}|$ has an uniform
upper bound. Substituting (\ref{qoe2}) and (\ref{qoe3}) into (\ref{qoe1}),
we obtain
\begin{equation}  \label{qoe4}
E_{\theta}(u^{\delta})\leq(1+C\varepsilon)E_{\theta^{\delta}}(u^{\delta})+C%
\varepsilon \leq(1+C\varepsilon)(\lambda(M,J^{\delta})+\varepsilon)+C\varepsilon.
\end{equation}
Similarly, we have
\begin{equation}  \label{qoe5}
E_{\theta^{\delta}}(u^{0})\leq(1+C\varepsilon)E_{\theta}(u^{0})+C%
\varepsilon
\leq(1+C\varepsilon)(\lambda(M,J)+\varepsilon)+C\varepsilon,
\end{equation}
where $u^{0}$ is a function such that
\begin{equation*}
\int_{M}(u^{0})^4 dV=1~~\mbox{ and }~~
\lambda(M,J)\leq E_{\theta}(u^{0})\leq\lambda(M,J)+\varepsilon.
\end{equation*}
Since $\int_{M}(u^{0})^{4}dV=1$, we have
\begin{equation}  \label{qoe6}
1-\varepsilon\leq\int_{M}(u^{0})^{4}dV^{\delta}\leq1+\varepsilon
\end{equation}
by (\ref{3.22}).
Similarly, we have
\begin{equation}  \label{qoe7}
1-\varepsilon\leq\int_{M}(u^{\delta})^{4}dV\leq1+\varepsilon.
\end{equation}
By  (\ref{qoe4})-(\ref{qoe7}), we can get
\begin{equation}
\frac{(1-\varepsilon)\lambda(M,J)-\varepsilon(1+C\varepsilon)+C\varepsilon}{%
1+C\varepsilon}\leq\lambda(M,J^{\delta})\leq\frac{(1+C\varepsilon)(%
\lambda(M,J)+\varepsilon)+C\varepsilon}{1-\varepsilon}.
\end{equation}
This completes the proof of Proposition \ref{proflim}.
\end{proof}

Therefore, if $\delta$ is small enough,
we have $\lambda (M,J^{\delta })>0$
by Proposition \ref{proflim}
and the assumption that $\lambda (M,J)>0$. By construction, each $J^{\delta }$ is spherical around the point $p\in M$. To
complete the proof of {\bf Theorem A}, we choose $\delta _{1},\delta _{2}$ so that
the CR Yamabe constants $\lambda (M_{1},J_{1}^{\delta _{1}})$ and $\lambda
(M_{2},J_{2}^{\delta _{2}})$ are both positive. Then, by using the argument in \cite%
{Cheng&Chiu} (see the paragraph after Theorem \ref{thm1}) to glue $M_{1}$
and $M_{2}$ by a Heisenberg cylinder, we get a CR structure on the connected
sum $M_{1}\#M_{2}$ with positive  CR Yamabe constant.

\bibliographystyle{plain}

\end{document}